\def\ps@pprintTitle{%
   \let\@oddhead\@empty
   \let\@evenhead\@empty
   \def\@oddfoot{\reset@font\hfil\thepage\hfil}
   \let\@evenfoot\@oddfoot
}
\title{\flushleft {A proof of cases of de Polignac's conjecture}}\vspace{2ex}
\author{Mbakiso F. Mothebe$^1,$  Dintle N. Kagiso$^2$  and Ben T. Modise$^3$ \\
\normalsize \it Department of Mathematics, University of Botswana, Pvt Bag 00704, \\
 \it Gaborone, Botswana$^{1,3}$\\
 \normalsize \it Botswana International University of Science and Technology, \\
 \normalsize \it Private Bag 16, Palapye, Botswana$^2$ \\
  \normalsize {\it e-mail add}: mothebemf@ub.ac.bw$^1$;
   dintlek.kagiso@gmail.com$^2$; modisesan@gmail.com$^3$ }
\date{}
\theoremstyle{plain}
\newtheorem{thm}{Theorem}[section]
\newtheorem{theorem}[thm]{Theorem}
\newtheorem{lemma}[thm]{Lemma}
\newtheorem{corollary}[thm]{Corollary}
\theoremstyle{definition}
\theoremstyle{plain}
\theoremstyle{definition}
\def\leq{\leqslant}
\def\geq{\geqslant}
\def\DD{D\kern-.7em\raise0.4ex\hbox{\char '55}\kern.33em}
\newtheorem*{acknow}{Acknowledgment}
\newtheorem*{intere}{Competing Interest}
\def\blfootnote{\xdef\@thefnmark{}\@footnotetext}
\begin{document}
\maketitle
\pagestyle{plain}

\begin{abstract}
For $n \geq 1$ let $ p_n $ denote the $n^{\rm th}$ prime number. Let $$S= \{1,7,11,13,17,19,23,29 \},$$ the set of positive integers
 which are both less than and relatively prime to $30.$ For $ x \geq 0,$ let \\ $T_x := \{ 30x+i \; | \; i \in S\}.$ For each $ x,$  $T_x$ contains at most seven primes.
   Let  $[ \; ]$  denote the floor or greatest integer function.
  For each integer $s \geq 30$ let $\pi_7(s)$ denote
 the number of integers $x, \; 0 \leq x < [\frac {s}{30}]$ for which $T_x$ contains seven primes. Let $m \geq 10^{10}$ be an integer and let $P_{K_m}$ denote the largest prime number less than $\sqrt{\prod_{i=1}^{m}p_i}.$
 In this paper we show that
$$\frac{\prod_{i=1}^{m}p_i}{8(K_m+1)} < \pi_7\left(\prod_{i=1}^{m}p_i\right) $$ and thereby prove that
  there are infinitely
 many values of $x$ for which $T_x$ contains seven primes. This, in particular, proves the well known twin prime conjecture as well as several cases of Alphonse de Polignac's conjecture that for every even number $k,$ there are infinitely many pairs of prime numbers
    $p$ and $p'$ for which $p'-p = k.$

\end{abstract}

\section{Introduction and main result}\label{s1}
\setcounter{equation}{0}

An integer $p \geq 2$ is called a prime if its only positive
divisors are $1$ and $p.$ The prime numbers form a sequence:
\begin{equation}  \label{primes}
2,\; 3,\;5, \;7, \; 11, \; 13, \; 17, \; 19, \; 23, \; 29, \; 31, \;
37, \; 41,\; 43,\; \ldots.
\end{equation}
Euclid (300 B.C.) considered prime numbers and proved that there are infinitely
many.

 In $1849,$ Alphonse de Polignac \cite{Po} conjectured that for
 every even number $k,$ there are infinitely many pairs of prime numbers $p$ and $p'$ such that $p'-p = k.$ The case $k=2$ is the well known twin prime conjecture, which is proved in \cite{moth1}.
 The
 conjecture has not yet been proven or disproven for any given value of $k \neq 2.$ In $2013$ an important breakthrough was made by Yitang Zhang who proved the conjecture for
  some value of $k< 70 \; 000 \; 000$ \cite{zhang}.  Later that same year, James Maynard announced a related breakthrough which proved the conjecture for some $k< 600$ (see \cite{mayn}). In 2014 the D.H.J. Polymath project proved the conjecture for some $k \leq 246.$ (see \cite{poly})

In this paper we prove cases of de Polignac's conjecture which are implied by the following result. Our arguments are an extension or generalization of the arguments developed in \cite{moth1}.

     \vspace{3ex}
\noindent \textbf{AMS Subject Classification:} 11N05; 11N36.

\vspace{.08in} \noindent \textbf{Keywords}: Primes, Seven prime sets,
Sieve methods.

\vspace{3ex}
Let
$$S= \{1,7,11,13,17,19,23,29 \},$$
 the set of integers
 which are both less than and relatively prime to $30.$ For $ x \geq 0,$ let $T_x := \{ 30x+i \; | \; i \in S\}.$ For each $ x,$  $T_x$ contains at most seven primes.
  For $s \geq 30$ let $\pi_7(s)$ denote
 the number of integers $x, \; 0 \leq x < [\frac {s}{30}],$
  for which $T_x$ contains seven primes.
  For example if $x = 0,1,2,49,62,79,89, 188,$ then $T_x$ contains seven primes. It is easy to show that $\pi_7(10^{8}) = 962.$

 In this paper we prove the following theorem which is also our main result:
\begin{thm}\label{main} For $i \geq 1$ let $ p_i $ denote the $i^{\rm th}$ prime number.
Let $m \geq 10^{10}$ be an integer and let $P_{K_m}$ denote the largest prime number less than $\sqrt{\prod_{i=1}^{m}p_i}.$ Then
$$\frac{\prod_{i=1}^{m}p_i}{8(K_m+1)} < \pi_7\left(\prod_{i=1}^{m}p_i\right). $$
\end{thm}

Since $\frac{\prod_{i=1}^{m}p_i}{8(K_m+1)}$ is an unbounded sequence, the theorem shows that there are infinitely many values of $x$ for which $T_x$ contains seven primes.
 For each $x,$ the elements of $T_x$ differ by $$2,4,6,8,10,12,16,18,22,28.$$ So we see that Theorem \ref{main} implies several cases of Alphonse de Polignac's conjecture. Since two pairs of elements of $T_x$ differ by $2,$ Theorem \ref{main} also establishes the well known twin prime conjecture.

  Our work is organized as follows: In
Section \ref{prelim} we recall the definition of the  well known sieve of Eratosthenes and record some preliminary results.
 In Section \ref{perm} we record our main observation, which is a comparison of the relative porousness of the sieve of Eratosthenes with that of another sieve that we shall refer to as the ``partition sieve."
    In Section \ref{proof2}, we prove Theorem \ref{main}. The concepts required are elementary and can be obtained from introductory texts on number theory, discrete mathematics and set theory. Some references are listed in the bibliography \cite{blair, Burt, Edgar, strayer}

 \section{Preliminary Results}\label{prelim}

 Eratosthenes ($276-194$ B.C.) was a Greek mathematician whose work in
 number theory remains significant. Consider the following lemma.

 \begin{lemma}\label{t1}
 Let $a > 1$ be an integer. If $a$ is not divisible by a prime
 number $p \leq \sqrt{a},$ then a is a prime.
 \end{lemma}

 Eratosthenes used the above lemma as a
 basis of a technique
  called ``Sieve of Eratosthenes" for finding all the prime numbers not exceeding a given
   integer $x$.
   The algorithm calls for writing down the integers from $2$ to $x$
   in their natural order. The composite numbers in the sequence are then sifted out by crossing off from $2,$ every second number (all multiples of two) in the list, from the next remaining number,
   $3,$ every third number, and so on for all the  remaining prime numbers less than or equal to
   $ \sqrt{x}.$ The integers that are left on the list  are primes. We shall refer to the set
   of integers left as the {\bf residue} of the sieve. The order of the residue set is therefore equal to $\pi(x),$ the number of primes not exceeding the integer $x.$

  We recall that $S= \{1,7,11,13,17,19,23,29 \}$ is the set of  integers which are both less than and relatively prime to $30$ and that for $ x \geq 0,$
  $T_x = \{ 30x+i \; | \; i \in S\}.$ For each integer $m \geq 1$  the sieve of Eratosthenes can be extended to the sequence of integers in the interval
  $,0 \leq x \leq m-1,$ or, equivalently, to the sets $$T_x= \{30x+1, 30x+7,30x+11, 30x+13,30x+17, 30x+19,30x+23, 30x+29\},$$
  to obtain those values of $x$ for which $T_x$ contains seven primes.
   More formally define $\phi_3(m)$ to be the number of integers $x, \; 0 \leq x \leq m-1,$ for  which $\gcd(m, 30x+i) = 1$ for all $i \in S.$
    We obtain a formula  for evaluating $\phi_3$ for certain values of $m.$

 If $p$ is a prime then $\phi_3(p)$ is easy to evaluate. For example
 $\phi_3(7) = 0$ since for all $x, \;  0 \leq x \leq 6$ the set $\{30x+i \; | \; i \in  S\}$ contains an integer divisible by $7.$ On the other hand if $p \neq  7,$ then $\phi_3(p) \neq 0.$
 It is easy to check that
  $\phi_3(p)=p$ if $p=2,3$ or $5.$ Further $\phi_3(11)=11-6$ and  $\phi_3(p)=p-8$ if $p \geq 13.$ We note also that $\phi_3(1)=1.$

 We now proceed to show that we can evaluate  $\phi_3(m)$ from the prime factorization of $m.$ Our  arguments are based on those used by Burton in \cite{Burt}, to show
  that the Euler phi-function is multiplicative.
  The following result together with its proof appear in \cite{mot}.
\begin{thm}\label{tm1} Let $k$ and $s$ be nonnegative numbers and let $p \geq 13$ be a prime number. Then:
\begin{enumerate}
\item [\rm{(i)}] $\phi_3(q^{k})= q^k$ if $q=2,3$ or $5.$ \item [\rm{(ii)}] $\phi_3(7^{s})= 0.$ \item [\rm{(iii)}]$\phi_3(11^{k})= 11^k-6\cdot11^{k-1}=11^k\left(1-\frac{6}{11}\right).$
\item [\rm{(iv)}]$\phi_3(p^{k})= p^k-8p^{k-1}=p^k\left(1-\frac{8}{p}\right).$
 \end{enumerate}
 \end{thm}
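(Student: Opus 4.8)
The plan is to handle all four parts by one reduction. Since $\gcd(m,30x+i)=1$ holds exactly when no prime dividing $m$ divides $30x+i$, for a prime power $m=r^{k}$ the condition defining $\phi_3(30m)$ collapses to: $r\nmid 30x+i$ for every $i\in S$. Whether $r\mid 30x+i$ depends only on $x\bmod r$, and as $x$ runs over $0\leq x\leq r^{k}-1$ each residue class modulo $r$ occurs exactly $r^{k-1}$ times. Hence, for $k\geq 1$, $\phi_3(30r^{k})=(r-B_{r})\,r^{k-1}$, where $B_{r}$ counts the ``forbidden'' residues $a\bmod r$, namely those for which $30a+i\equiv 0\pmod{r}$ for some $i\in S$. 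The whole theorem thus reduces to computing $B_{r}$ for $r\in\{2,3,5,7,11\}$ and for every prime $r=p\geq 13$.

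For part (i) take $r=q\in\{2,3,5\}$: since $q\mid 30$ we have $30x+i\equiv i\pmod{q}$, and each $i\in S$ is coprime to $30$ and hence to $q$; so $B_{q}=0$ and $\phi_3(30q^{k})=q^{k}$, the value $k=0$ being the already-noted $\phi_3(30)=1$. For part (ii) take $r=7$: as $30\equiv 2\pmod{7}$ is invertible and the elements of $S$ realize every residue modulo $7$ (one checks $S\bmod 7=\{0,1,2,3,4,5,6\}$), every residue of $x$ is forbidden, so $B_{7}=7$ and $\phi_3(30\cdot 7^{s})=0$ for $s\geq 1$.

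For parts (iii) and (iv) the key observation is that when $\gcd(30,r)=1$---which holds for $r=11$ and for every prime $p\geq 13$---the map sending the residue of $i$ to the unique $a\bmod r$ with $30a+i\equiv 0$ is injective, so $B_{r}$ equals the number of distinct residue classes occupied by $S$ modulo $r$. Thus I need that $S$ meets exactly $6$ residue classes mod $11$ and exactly $8$ residue classes mod every prime $p\geq 13$. Two elements of $S$ are congruent mod $r$ precisely when $r$ divides their difference, and the set of positive differences of $S$ is $\{2,4,6,8,10,12,16,18,22,28\}$, every member of which has all its prime factors at most $11$. Consequently no prime $p\geq 13$ divides any difference, all $8$ elements stay distinct mod $p$, and $B_{p}=8$; while modulo $11$ the only differences that vanish are the two equal to $22$ (coming from $23-1$ and $29-7$), so $8$ residues collapse to $6$ and $B_{11}=6$. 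Substituting back gives $\phi_3(30\cdot 11^{k})=(11-6)\,11^{k-1}=11^{k}-6\cdot 11^{k-1}$ and $\phi_3(30p^{k})=(p-8)\,p^{k-1}=p^{k}-8p^{k-1}$ for $k\geq 1$, extending the $k=1$ cases recorded above.

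The only mildly delicate point---the one I would flag as the genuine obstacle---is that for the small primes $p\in\{13,17,19,23,29\}$ an element of $S$ can itself be a multiple of $p$, so distinctness of the residues of $S$ modulo $p$ cannot be read off merely from the elements of $S$ being smaller than $p$; one must instead use the stronger fact that every pairwise difference of $S$ has all its prime factors at most $11$. Once that property of the difference set is isolated the remaining verifications are short, and the passage from modulus $r$ to modulus $r^{k}$ is the routine counting remark already used above.
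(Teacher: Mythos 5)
Your proof is correct and follows essentially the same route as the paper: count the forbidden residue classes of $x$ modulo the prime, note that each class occurs $p^{k-1}$ times in $0\leq x\leq p^{k}-1$, and treat $7$ and $11$ specially because of the coincidences $S \bmod 7$ covering all residues and $23\equiv 1,\ 29\equiv 7 \pmod{11}$. The only place you go beyond the paper is in justifying its bare assertion that ``no such case arises when $p\geq 13$'': your observation that every pairwise difference of elements of $S$ factors only over $\{2,3,5,7,11\}$ supplies exactly the detail the paper leaves unstated.
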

 \begin{proof}

We shall only consider the cases (iii) and
 (iv) as (i) and (ii) are easy to verify.\\[2ex]
(iii) and (iv). Clearly, for each $i \in S,$ $\gcd(30x+i, p) = 1$ if and only if $p$ does not divide $30x+i.$ Further for each $i \in S,$ there exists one integer $x$ between $0$ and $p-1$ that satisfies
the congruence relation $30x+i \equiv 0 \pmod p.$ We note however that if $p=11,$ then in $S,$ we have $23 \equiv 1 \pmod {11}$ and $29 \equiv 7 \pmod {11}.$ Hence for all $x$ for which
$30x+1 \equiv 0 \pmod {11}$ we also have $30x+23 \equiv 0 \pmod {11}$ and for all $x$ for which $30x+7 \equiv  0 \pmod {11}$ we also have $30x+29 \equiv 0 \pmod {11}.$ No such case arises when $p \geq 13.$
\end{proof}

Returning to our discussion, it follows that for each $i \in S$ there are $p^{k-1}$ integers between $0$ and $p^k-1$ that satisfy $30x+i \equiv 0 \pmod p.$
  Thus for each $i \in S,$ the set
$$\{30x+i \; | \; 0 \leq x \leq p^k-1 \}$$ contains exactly $p^k-p^{k-1}$ integers $x$ for which
   $\gcd(p^k,30x+i) = 1.$
   Since these integers $x$ are distinct for distinct elements $i \in S$ it follows that if
$p \geq 13,$ we must have $\phi_3(p^{k})= p^k-8p^{k-1}.$ However if $p=
   11$ we must have $\phi_3(11^{k})= 11^k-6\cdot11^{k-1}.$

For example $\phi_3(11^2) = 11^2 - 6\cdot11=55$ and $\phi_3(13^2) =
13^2- 8\cdot13=65.$

   In \cite{mot} it is shown that $\phi_3$ is multiplicative and that we have the following theorem.

   \begin{thm}\label{tm3} If the integer $m>1$ has the prime  factorization $$m=2^{k_{1}} 3^{k_{2}}5^{k_{3}}11^{k_{4}}p_{5}^{k_{5}}
 \cdots p_{r}^{k_{r}}$$
  with $p_s \neq 7$ for any $s \geq 5,$
  then
 \begin{eqnarray*}
 \phi_3(m)&=2^{k_{1}}3^{k_{2}}5^{k_{3}}(11^{k_{4}}-6\cdot11^{k_{4}-1})
 (p_{5}^{k_{5}}-8p_{5}^{k_{5}-1})
 \cdots (p_{r}^{k_{r}}-8p_{r}^{k_{r}-1}).\\
 \end{eqnarray*}
 \end{thm}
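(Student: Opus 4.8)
The plan is to reduce the theorem to Theorem \ref{tm1} by first establishing that $\phi_3$ is \emph{multiplicative} in the sense that $\phi_3(30ab) = \phi_3(30a)\,\phi_3(30b)$ whenever $\gcd(a,b) = 1$. Granting this, one writes $m = 2^{k_1}3^{k_2}5^{k_3}11^{k_4}p_5^{k_5}\cdots p_r^{k_r}$ as a product of pairwise coprime prime powers, applies multiplicativity repeatedly to get
\begin{equation*}
\phi_3(30m) = \phi_3(30\cdot 2^{k_1})\,\phi_3(30\cdot 3^{k_2})\,\phi_3(30\cdot 5^{k_3})\,\phi_3(30\cdot 11^{k_4})\prod_{s=5}^{r}\phi_3(30\cdot p_s^{k_s}),
\end{equation*}
and then substitutes the prime-power values $\phi_3(30 q^{k}) = q^k$ for $q \in \{2,3,5\}$, $\phi_3(30\cdot 11^{k_4}) = 11^{k_4} - 6\cdot 11^{k_4-1}$ and $\phi_3(30\cdot p_s^{k_s}) = p_s^{k_s} - 8 p_s^{k_s-1}$ supplied by Theorem \ref{tm1} (using that $p_s \neq 7$, so case (ii) never intervenes). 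Collecting the factor $2^{k_1}3^{k_2}5^{k_3}$ in front then yields the displayed formula.

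For the multiplicativity step I would mimic Burton's proof \cite{Burt} that Euler's totient is multiplicative. Fix coprime $a, b$ and call a residue class $x \bmod N$ \emph{admissible for} $N$ if $\gcd(N, 30x + i) = 1$ for every $i \in S$; this property depends only on the class of $x$ modulo $N$, since $30x+i \bmod N$ does, and $\phi_3(30N)$ counts exactly the admissible classes modulo $N$. Because $\gcd(a,b)=1$, for any integer $y$ we have $\gcd(ab, y) = 1$ iff $\gcd(a,y)=1$ and $\gcd(b,y)=1$; applying this with $y = 30x+i$ and quantifying over $i \in S$ shows that $x$ is admissible for $ab$ iff $x$ is admissible for $a$ and admissible for $b$. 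The Chinese Remainder Theorem gives a bijection between residues mod $ab$ and pairs (residue mod $a$, residue mod $b$), under which the admissible classes mod $ab$ correspond exactly to the pairs consisting of an admissible class mod $a$ together with an admissible class mod $b$. Counting both sides gives $\phi_3(30ab) = \phi_3(30a)\,\phi_3(30b)$.

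The one point requiring care — and the only real obstacle — is the observation that admissibility depends only on the residue class and that the CRT bijection respects it \emph{simultaneously} for all eight values $i \in S$; once that bookkeeping is set up correctly, the counting is immediate. It is also worth checking the base cases drawn from Theorem \ref{tm1} against the small examples already recorded (e.g.\ $\phi_3(30\cdot 11^2) = 55$ and $\phi_3(30\cdot 13^2) = 65$), and noting that the hypothesis $p_s \neq 7$ is precisely what keeps every factor on the right-hand side nonzero.
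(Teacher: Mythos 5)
Your proposal is correct and follows essentially the route the paper intends: the paper does not prove Theorem \ref{tm3} in-text but cites \cite{mot} for the multiplicativity of $\phi_3$, which is itself modeled on Burton's CRT-style proof that Euler's phi-function is multiplicative, exactly as you reconstruct. Combining that multiplicativity with the prime-power evaluations of Theorem \ref{tm1} is precisely the intended argument, so your write-up simply supplies the details the paper delegates to its reference.
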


   We have seen that if  $p=7$ then for all $x,$ the set $\{30x+i \; | \; i \in  S\}$ contains an integer divisible by $7.$  We note further that if $p=7,$ then
    in $S,$ we have $29 \equiv 1 \pmod {7}$ and hence for all $x$ for which
$30x+1 \equiv 0 \pmod {7}$ we also have $30x+29 \equiv 0 \pmod {7}.$  Thus there exists one integer $x$ between $0$ and $6$ that simultaneously satisfies
the congruence relations $30x+1 \equiv 0 \pmod {7}$ and $30x+29 \equiv 0 \pmod {7}.$  A procedure for obtaining seven prime subsets must therefore consider the fact that $T_x$ may contain
two distinct integers divisible by $7.$
Taking this into consideration we obtain the following modification of $\phi_3.$ Let $ p_4=7, \; p_5=11, \; \ldots, \; p_{n}, \ldots $ be the ordered sequence of
 consecutive prime numbers in ascending order and let
$m := 7\cdot11\cdot13 \cdots  p_n.$ If $k \leq n,$  then by Theorem \ref{tm3}
  $$ S_7(m,k) =  6\cdot5(p_{6}-8) \cdots (p_{k}-8)p_{k+1} \cdots p_n  $$
is the number of values of $x,$ $0 \leq x \leq m-1,$ for which $T_x$ contains  $7$ integers that are relatively prime to $7\cdot11\cdot13 \cdots  p_k.$ In particular if one such value of $x$ is less than or equal to
$[\frac{p_{k+1}^2-1}{30}]$  then, by Lemma \ref{t1},
$T_x$ contains seven primes. Let
$$R_7(m,n):= \{x \; | \; 0 \leq x \leq m-1, \; T_x \;  \mbox{ contains $7$ integers that are relatively prime to $m$} \}$$
so that $|R_7(m,n)| = S_7(m,n).$ Let
 $$T_7(p^2_{n+1}) := \{x \in R_7(m,n)\; | \; x \leq [\frac{p_{n+1}^2-1}{30}] \}.$$ Then for all $x \in T_7(p^2_{n+1}),$ $T_x$ contains seven primes so that
$|T_7(p^2_{n+1})| \leq \pi_7\left(p^2_{n+1} \right).$
 We note, by writing
 \begin{eqnarray}\label{sevp1} S_7(m,n) =  m(1 - \frac{1}{7})(1 - \frac{6}{11})(1 - \frac{8}{p_6}) \cdots (1 - \frac{8}{p_n}), \end{eqnarray} that $S_7(m,n)$ may
 be computed iteratively as follows: for each $j,$ $ 4 \leq j \leq n,$ let $S_7(m,n,4) = m(1-\frac{1}{7} ),$  $S_7(m,n,5) = S_7(m,n,4)(1-\frac{6}{11} ),$ and for $j \geq 6$ let $S_7(m,n,j) = S_7(m,n,j-1)(1-\frac{8}{p_j} ).$
 Associated with the expression for $S_7(m,n)$ is a sieve on the set of integers $$0,1,2,3,4, \ldots , m-1$$
   which sifts out $x$ if either $T_x$ contains two integers divisible by $7$ or an integer $y$ not divisible by $7$ but divisible by $p_j$ for some $j,$ $5 \leq j \leq n.$ The residue set of the respective sieve is then $R_7(m,n).$
  Since the primes $p_j$ are unevenly distributed they sift out the values $x$ in an unevenly
 distributed manner. However $S_7(m,n),$ when viewed as a sieve in the manner above, is
 cyclic in the sense that when extended to the set of all integers, then for each $j\leq n,$ $p_j$ sifts out the same number of values of $x,$ with the same irregularity,
 over each interval $$s\cdot7\cdot11\cdot{\prod_{i=6}^{j} p_{i}} \leq x <   (s+1)7\cdot11\cdot{\prod_{i=6}^{j} p_{i}}.$$ The
 average density of elements of the residue set is therefore $\frac{30m}{S_7(m,n)}$
or  $$2\cdot3\cdot5\cdot{\frac{7}{6}}{\frac{11}{5}} \left( \prod_{j=6}^{n} \frac{p_j}{ (p_{j}-8)} \right).$$


   We shall require the following results of J.B. Rosser and L. Schoenfeld (see \cite{ross} page 69):
\begin{theorem}\label{ross1} {\rm[\cite{ross} Theorem 1]} Let $n \geq 1$ be an integer. Then:
\begin{itemize}
\item[{\rm (i)}] $\frac{n}{\log n -\frac{1}{2}} < \pi(n)$ \; \; for \; $n \geq 67, $
 \item[{\rm (ii)}] $ \pi(n) < \frac{n}{\log n -\frac{3}{2}} $ \; \; for $n \geq 5.$
\end{itemize}
\end{theorem}
\begin{corollary}\label{ross2}  Let $n \geq 1$ be an integer. Then:
\begin{itemize}
 \item[ ] $\frac{n}{\log n} < \pi(n) \; \;$ for \; $n \geq 17.$
 \end{itemize}
\end{corollary}
\begin{theorem}\label{ross3}  {\rm[\cite{ross} Theorem 3]} Let $n \geq 1$ be an integer. Then:
\begin{itemize}
\item[{\rm (i)}] $n(\log n +\log \log n - {\frac{3}{2}}) < p_n$ \; \; for \; $n \geq 2,$
 \item[{\rm (ii)}] $  p_n < n(\log n +\log \log n - {\frac{1}{2}})$ \; \; for \; $n \geq 20.$
\end{itemize}
\end{theorem}
\begin{corollary}\label{ross4}  Let $n \geq 1$ be an integer. Then:
 \begin{itemize}
\item[{\rm (i)}] $n(\log n ) < p_n$ \; \; for \; $n \geq 1,$
 \item[{\rm (ii)}] $  p_n < n(\log n +\log \log n )$ \; \; for \; $n \geq 6.$
\end{itemize}
\end{corollary}
As a consequence of the above results we have the following result (see \cite{moth1}):
\begin{theorem}\label{maint32} \cite{moth1}
For $n \geq 3,$ let $p_n$ denote the $n^{\rm th}$ prime. Then for each integer $b > 0$  there exists an integer $N(b)$ such that
$$ \frac {b p^2_{n+1}}{n+1} <\pi\left(p^2_{n+1} \right)$$ for all $ n \geq N(b).$
\end{theorem}

 \section{ Comparison of the sieve of Eratosthenes and the Partition sieve }\label{perm}
 We now compare the porousness of the sieve of Eratosthenes with a sieve that may be obtained from the identity
  $$1 - \sum_{s=1}^{k}\frac{1}{s(s+1)}= {\frac{1}{k+1}} .$$
  The idea is to let $x$ be a positive integer. Then we have
 \begin{equation}\label{sf1} x - \sum_{s=1}^{k}\frac{x}{s(s+1)}= {\frac{x}{k+1}}.   \end{equation}
 We then consider $\frac{x}{k+1}$ as a measure of the residue or an estimate of the number of integers that remain after applying
 $\sum_{s=1}^{k}\frac{x}{s(s+1)}$ to the sequence
 \begin{equation}\label{sf2} 1,2,3,4, \ldots , x .\end{equation} Viewed in this way we shall refer to the sieve \ref{sf1} as the {\bf partition sieve}.
 Written in this manner, the partition sieve could then be applied inductively to the sequence \ref{sf2}. We have seen, from definition, that the sieve of Eratosthenes may also be applied
  inductively for each given value of $x.$

   For each integer $s\geq 1,$ let $p_s$ denote the $s^{\rm th}$ prime number. Let $n, k $ with $n \geq k$ be a pair of integers. For each integer $x \geq p^2_{n+1}-1,$ let $S(x,k)$ denote the sum
  \begin{equation}\label{f1} S(x,k):= x + \sum_{j=1}^{k} (-1)^j \left\{ \sum_{1\leq
 s_1 < \cdots < s_{j}  \leq k }  \left[ \frac{x}{\prod_{i=1}^{j} p_{s_i}}
   \right] \right\}.
   \end{equation}
   The sum in Equation (\ref{f1}) is based on the inclusion-exclusion principle and can be considered as a sieve on the sequence of integers;
   \begin{equation}\label{f1s} 1,2,3,4,5, \ldots , x \end{equation}
   which sifts out all integers $y$ for which g.c.d.$(y, p_s) \neq 1$ for some $s, \; 1 \leq s \leq k.$
     We note that the expression for the value $S(x,k)$ sifts out the primes $p_j, \; 1 \leq j \leq k,$ from the sequence (\ref{f1s}). This is the only difference between the sieve represented by this expression and the sieve of Eratosthenes. Let $x = p^2_{n+1}-1$ for some $n \geq k.$
      We now compare the
      values $S(p^2_{n+1}-1,n)$ with $\frac{p^2_{n+1}-1}{n+1},$ the order of the residue of the sieve $p^2_{n+1}-1 - \sum_{s=1}^n \frac{p^2_{n+1}-1 }{s(s+1)}.$
     The comparison can be achieved inductively. Let ${\cal S}(x,k),$ denote the set of all positive integers not exceeding $x$ which are relatively prime to the primes $p_j, \; 1 \leq j \leq k.$ Then $ S(x,k) = |{\cal S}(x,k)|.$ Note that for $x = p^2_{n+1}-1,$ the effect of $S(x,n)$ on the sequence \ref{f1s} coincides with that of the sieve of Eratosthenes apart from the fact that $S(x,n)$ also sifts out the primes $2,3,5, \ldots, p_n.$ Thus  $\pi\left(p^2_{n+1}\right) =  S(p^2_{n+1}-1,n)+n-1.$

    In the following result we show, in particular, that the number of primes between $p_{n}$ and $p^2_{n+1}$ is unbounded as $n$ increases. The result is an immediate consequence of Theorem \ref{maint32} (see \cite{moth1}). In fact the rest of the results in this section and their proofs are mere extensions of our work \cite{moth1}.

   \begin{corollary}\label{th3} For $n \geq 12,$ let $p_n$ denote the $n^{\rm th}$ prime. Then for each integer $d \geq 1$  there exists an integer $N(d)$ such that
   $\frac {d p^2_{n+1}}{n+1} < S( p^2_{n+1}-1,n),$ for all $ n \geq N(d).$
   \end{corollary}
\begin{proof}
   If $n \geq 12,$ then
   $ \frac {b p^2_{n+1}}{n+1} <\pi\left(p^2_{n+1} \right)$ for some integer $b \geq 2.$
   Since $S( p^2_{n+1}-1,n) = \pi\left(p^2_{n+1} \right ) - n+1$ and
   $ n-1 < \frac {(n+1)^2 \log^2 (n+1)}{n+1} < \frac { p^2_{n+1}}{n+1},$
 we have
 $$  \frac {(b-1) p^2_{n+1}}{n+1} < \frac {b p^2_{n+1}}{n+1} -n+1 < \pi\left(p^2_{n+1} \right) - n+1  = S( p^2_{n+1}-1,n)$$ for all  $n \geq 12.$
  The result of the corollary follows if we put $d=b-1.$
 \end{proof}

 Since $\frac {d (p^2_{n+1}-1)}{n+1} < \frac {d p^2_{n+1}}{n+1}$ we see that if for each integer $n \geq 12,$ we put $d_n = \frac {(n+1)S(p^2_{n+1}-1,n)}{p^2_{n+1}-1}, $ then, from the result of Corollary \ref{th3}, we have an unbounded sequence of rational numbers $\{d_n\}_{n \geq 12}$ such that
$$S(p^2_{n+1}-1,n) = \frac {d_n (p^2_{n+1}-1)}{n+1}.$$ But for each $ n \geq 4,$ $S(p^2_{n+1}-1,n)$ may be computed inductively from $S( p^2_{n+1}-1,3),$ forming a finite sequence of values $S( p^2_{n+1}-1,k),$ $3 \leq k \leq n.$ For each $n \geq 4 $ and $k, \; 3 \leq k \leq n,$ we have \\
 $S(p^2_{n+1}-1,k+1) = S(p^2_{n+1}-1,k) - T( p^2_{n+1}-1,k+1),$
     where
     \begin{equation}\label{fse16}T( p^2_{n+1}-1,k+1) :=  \left[ \frac{ p^2_{n+1}-1}{p_{k+1}} \right] + \sum_{j=1}^{k} (-1)^{j} \left\{ \sum_{1\leq
 s_1 < \cdots < s_{j}  \leq k }  \left[ \frac{ p^2_{n+1}-1}{{p_{k+1}}\prod_{i=1}^{j} p_{s_i}}
   \right] \right\} . \end{equation}
In the same vain, $d_n$ is the last term of a sequence of numbers $\{a_k(n)\}, \; 3 \leq k \leq n,$ defined, for each fixed integer $n \geq k,$ by $a_k(n) :=  \frac {(k+1)S(p^2_{n+1}-1,k)}{p^2_{n+1}-1}.$

The following is our main observation in this section (see \cite{moth1}):

\begin{lemma}\label{th30} Let $n,$ $k$ with, $ k \leq n,$ be a pair of positive integers. Then
$8T(p^2_{n+1}-1,k+1) < \frac {a_{k}(n)( p^2_{n+1}-1)}{(k+1)(k+2)}$ for all $k \geq 10^{10}.$
\end{lemma}
\begin{proof} We first
%
   show that for each fixed $n$ and all $k \leq n,$
 $\{a_{k}(n)\}$ is a nondecreasing sequence.
 To get a more explicit estimate for $a_{k}(n)$ for a given value of $k \geq 30,$ we note that if $k=n,$ then
  \begin{eqnarray}\label{fv9}
 a_{n}(n)&= &\frac {(n+1)S(p^2_{n+1}-1,n)}{p^2_{n+1}-1} \nonumber \\[2ex]
 \hspace{4ex} & = & \frac {(n+1)(\pi\left(p^2_{n+1} \right ) - (n-1) )}{p^2_{n+1}-1} \nonumber \\[2ex]
 \hspace{4ex} & > & \frac {(n+1)(\frac{ p_{n+1}^2 }{(2{\log}(p_{n+1}))} - (n-1) )}{p^2_{n+1}-1} \nonumber \\[2ex]
 \hspace{4ex} & > & \frac {(n+1)}{(2{\log}(p_{n+1}))} - \frac{(n+1) (n-1)}{(n+1)^2 ({\log}(n+1))^2} \; \; \;  > \; \;  \frac {(n+1)}{(2{\log}(p_{n+1}))} - \frac{1}{10}. \nonumber
 \end{eqnarray}
 By Theorem \ref{ross1}, $\frac {m}{({\log}(\frac{m}{1.64}))} < \frac {m}{({\log}(m-\frac{1}{2}))} < \pi (m)$ for $m \geq 67.$ We see therefore that we must have \\
 $a_{n}(n) > \frac {(n+1)}{(2{\log}(p_{n+1}))}$ for all $n \geq 30.$

 Recall that ${\cal S}(p^2_{n+1}-1,k)$ represents the residue set of the sieve of Equation \ref{f1}, that is, is the set of all positive integers not exceeding $p^2_{n+1}-1$ which are relatively prime to
 the primes $p_s, \; 1 \leq s \leq k.$ Let $ \{q^{k}_r\}, \; r \geq 1$ be the ordered sequence of elements of ${\cal S}(p^2_{n+1}-1 ,k),$ so that $q^{k}_1 = 1, q^{k}_2= p_{k+1}, q^{k}_3 = p_{k+2}, \ldots .$ Then
  $q^{k}_r$ is a prime whenever $q^{k}_r < p^2_{k+1}.$ For $n > k,$  ${\cal S}(p^2_{n+1}-1,k+1)$  is obtained from ${\cal S}(p^2_{n+1}-1, k)$ by sifting out all products $q^{k}_r p_{k+1}$ less than $p^2_{n+1}-1,$ where, for each $r,$ $q^{k}_r$ is an element of ${\cal S}(p^2_{n+1}-1, k)$ or, equivalently,
   $$S(p^2_{n+1}-1,k+1) = S(p^2_{n+1}-1,k) - T( p^2_{n+1}-1,k+1).$$
  Now
  $$\frac {a_k(n)(p^2_{n+1}-1)}{k+1} = S(p^2_{n+1}-1,k)$$ and
  \begin{eqnarray}\label{fce9}
 \frac {a_{k}(n)(p^2_{n+1}-1)}{k+2}& = &\frac {a_{k}(n)(p^2_{n+1}-1)}{k+1} - \frac{a_{k}(n)(p^2_{n+1}-1) }{(k+1)(k+2)} \nonumber \\[2ex]
\hspace{4ex} & =  &\frac {p^2_{n+1}-1}{\frac{1}{a_{k}(n)}(k+1)} - \frac{p^2_{n+1}-1 }{\frac{1}{a_{k}(n)} (k+1)(k+2)} \nonumber
 \end{eqnarray}
 Thus $a_{k+1}(n) = a_k(n)$ if $$T( p^2_{n+1}-1,k+1) = \frac{p^2_{n+1}-1 }{\frac{1}{a_{k}(n)} (k+1)(k+2)}.$$ It follows that $a_{k+1}(n) \geq a_k(n)$ only if
 $$T( p^2_{n+1}-1,k+1) \leq \frac{p^2_{n+1}-1 }{\frac{1}{a_{k}(n)} (k+1)(k+2)}.$$
  From our remarks above, it suffices to show that
 \begin{equation}\label{fs2}
 r(\frac{1}{a_k(n)}(k+1)(k+2))< p_{k+1}q^{k}_r
  \end{equation}
 for each $r \geq 1$ for which both products are less than $p^2_{n+1}-1.$ Since $(k+1)(\log (k+1) ) < p_{k+1},$  it suffices to show that $r(\frac{1}{a_k(n)}(k+2)) < (\log (k+1) )q^{k}_r$ or, equivalently,
 $\frac{r}{\log (k+1)}(\frac{1}{a_k(n)}(k+2)) < q^{k}_r.$ If $1<q^{k}_r <  p^2_{k+1},$ then
 $ q^{k}_r$ is equal to a prime number $p_{s}$ with $s >k.$ We know that $s\log s < p_s.$ Treating $s\log s$ as a function of $s$ we get its derivative to be $1+\log s.$ Treating
  $\frac{r}{\log (k+1)}(\frac{1}{a_k(n)}(k+2))$ as a function of $r$ we get its derivative to be equal to $\frac{1}{\log (k+1)}(\frac{1}{a_k(n)}(k+2)).$ By virtue of our estimation of $a_n(n)$ above we assume
   that $a_k(n) \geq \frac {(k+1)}{(2{\log}(p_{k+1}))}$ or that $ \frac {(k+1)}{(2{\log}(p_{k+1}))}$ is a close estimate for $a_k(n).$
   Then we would have that
  $\frac{1}{\log (k+1)}(\frac{1}{a_k(n)}(k+2))$ is less than or approximately equal to  $\frac {(k+2)(2{\log}(p_{k+1}))}{(k+1)(\log (k+1) )}.$
  Now
   \begin{eqnarray}\label{fce9}
 \frac {(k+2)(2{\log}(p_{k+1}))}{(k+1)(\log (k+1))}& = &\frac {2{\log}(p_{k+1})}{\log (k+1) } + \frac {2{\log}(p_{k+1})}{(k+1)\log (k+1) } \nonumber \\[2ex]
\hspace{4ex} & <  &\frac {2{\log}((k+1)(\log (k+1)+ \log \log (k+1) ))}{\log (k+1) } + 0.1 \nonumber \\[2ex]
\hspace{4ex} & =  & 2 + \frac {2{\log}(\log (k+1)+ \log \log (k+1) )}{\log (k+1) } + 0.1  \; \; \; \; < 3 \nonumber
 \end{eqnarray}
 But
  $3 < 1+\log s, \; \; s \geq k+1$ and $k \geq 30.$ This establishes the Inequality (\ref{fs2}) for
   $1\leq q^{k}_r <  p^2_{k+1}.$ For $k \geq 30,$ the set ${\cal S}(p^2_{n+1}-1,k)$ is more dense over the interval $1\leq q^{k}_r <  p^2_{k+1},$ than over the interval
   $q^{k}_r \geq  p^2_{k+1}.$ The above argument therefore suffices for the cases $q^{k}_r \geq  p^2_{k+1}.$
   For $s \geq 10^{10},$ $24 = 8\cdot3 < 1+\log s$ and
     this completes the proof of the lemma.
  \end{proof}
  Thus when $k \geq 10^{10},$ then at each inductive stage, the partition sieve is more porous (leaves a smaller residue set) by at least eight times the porousness of the sieve of Eratosthenes or that of Equation \ref{f1}. In particular, the result of Lemma \ref{th30} is not dependent on $x$ being equal to $p^2_{n+1}-1,$ but could be extended to any value of $x \geq  p^2_{n+1}.$

  \section{Proof of Theorem \ref{main}}\label{proof2}

 The result of Theorem \ref{main} shall be seen to be a consequence of the following observation.

 Let $p_i$ denote the $i^{\rm th}$ prime number and let $m \geq T=10^{10}.$ Let  $P_{K_m}$ denote the largest prime number less than
 $$\sqrt{\prod_{i=1}^{m}p_i}.$$
  By the result of Lemma \ref{th30}, it suffices to show that
\begin{eqnarray}\label{iq1}\frac{\prod_{i=1}^{m}p_i}{8(T+1)} < S_7\left(\prod_{i=4}^{m}p_i , T \right) = 6\cdot5\prod_{i=6}^{T}(p_i-8)\prod_{i=T+1}^{m}p_i \end{eqnarray}
or, equivalently, that
 $$ \frac{8\cdot (T+1)}{2\cdot3\cdot5}\cdot{\frac{6}{7}}\cdot{\frac{5}{11}} \left( \prod_{j=6}^{T} \frac{(p_j -8)}{ p_{j}} \right)\prod_{i=T+1}^{m}p_i >1 .$$
 This can be shown to be the case by direct computation when $m=T.$ This, in turn, implies that the inequality \ref{iq1} holds
 for all $m \geq 10^{10}.$

 Now for each $m$ apply the sieve \ref{sf1} inductively to $\frac{\prod_{i=1}^{m}p_i}{8(T+1)}$ starting with $k = T.$ In practice, for each fixed $m$ we apply the sieve
 $$\frac{\prod_{i=1}^{m}p_i}{8(T+1)} - \sum_{r=T+1}^{k}\frac{\prod_{i=1}^{m}p_i}{8r(r+1)}= {\frac{\prod_{i=1}^{m}p_i}{8(k+1)}} ,$$ inductively, replacing $\frac{\prod_{i=1}^{m}p_i}{8(T+1)}$ by its respective residue at each stage and letting $T < k \leq K_m.$

  On the other hand there is no simply expression for a sieve that can be applied to $\prod_{i=4}^{m}p_i $ to yield a residue set of order $\pi_7\left(\prod_{i=1}^{m}p_i \right).$ Instead, for each $k,$ $T < k \leq K_m,$
  and, as explained in Chapter \ref{prelim},
  consider the sieve associated with the expression
 $$S_7(\prod_{i=4}^{K_m}p_i, k ) = 6\cdot5\prod_{i=6}^{k}(p_i-8)\prod_{i=k+1}^{K_m}p_i   .$$ Then the residue set for the sieve will be
  $$R_7(\prod_{i=4}^{K_m}p_i,k) = \{ x \; | \; 0 \leq x < \prod_{i=4}^{K_m}p_i, \; T_x \; \mbox{ contains $7$ integers that are relatively prime to $\prod_{i=4}^{k}p_i$} \}. $$  Now let $k = K_m$ and
  let
  $$T_7(\prod_{i=1}^{m}p_i)  = \{ x  \in R_7(\prod_{i=4}^{K_m}p_i,k) \; | \;  x < \prod_{i=4}^{m}p_i \;\}.$$
 By Lemma \ref{t1} if $x \in T_7(\prod_{i=1}^{m}p_i),$ then $T_x$ contains seven primes so that
 $$|T_7(\prod_{i=1}^{m}p_i)| < \pi_7\left(\prod_{i=1}^{m}p_i \right).$$
 Now let $k , T < k \leq K_m$ be given. By  Lemma \ref{th30}, for each value of $x$ for which $T_x$ contains a multiple of
of a prime $p_j$ ($10^{10}<j \leq K_m$ and with $p_j$ as its smallest divisor),
 there corresponds at least eight integers $r_i$ (unique to $k$) for which each satisfies
 $$r_i({\frac{1}{a_{k}(n)} (k+1)(k+2)})< \prod_{i=1}^{m}p_i$$
 and bears the relation \ref{fs2} with $ p_j.$ There therefore exists an integer $r,$ determined by the integers $r_i,$ (hence unique to $k$) such that
 $$r({\frac{8}{a_{k}(n)} (k+1)(k+2)})< \prod_{i=1}^{m}p_i.$$

 Thus in proceeding from $k=T$ to $k = K_m$ the Sieve \ref{sf1} sifts out more elements from the initial residue set of order
 $\frac{\prod_{i=1}^{m}p_i}{8(T+1)}$ than the extension of the Sieve  \ref{f1} from it initial residue set of order $$6\cdot5\prod_{i=6}^{T}(p_i-8)\prod_{i=T+1}^{m}p_i.$$ Note that $|T_x| = 8$ and dividing by ${\frac{8}{a_{k}(n)} (k+1)(k+2)}$ sifts out eight more integers than dividing by ${\frac{1}{a_{k}(n)} (k+1)(k+2)}.$
 %
  But
  $$\frac{\prod_{i=1}^{m}p_i}{8(T+1)} <  6\cdot5\prod_{i=6}^{T}(p_i-8)\prod_{i=T+1}^{m}p_i$$
   so, by the foregoing, the orders of the final residue sets must bear the relation:
   $$ \frac{\prod_{i=1}^{m}p_i}{8\cdot (K_m+1)} <  |T_7(\prod_{i=1}^{m}p_i)| <  \pi_7\left(\prod_{i=1}^{m}p_i\right). $$
   This completes the proof of the theorem.

\begin{intere}
 The authors declare that they do not have any competing interest.
 \end{intere}
 \begin{acknow} An earlier version of this paper is presented as a preprint on arxiv.org. and may be viewed at the following link:
  https://arxiv.org/abs/1912.09290"
  \end{acknow}

\end{document}